\numberwithin{equation}{section}
\newtheorem{thm}{Theorem}[section]
\newtheorem{cor}[thm]{Corollary}
\newtheorem{prop}[thm]{Proposition}
\theoremstyle{definition}
\begin{document}
\title{Composition operators on Hardy spaces of a half plane}
\author[S.~J.~Elliott and M.~T.~Jury]{Sam Elliott$^1$ and Michael T. Jury$^2$}

\address{Department of Pure Mathematics\\
  University of Leeds\\
  Leeds\\
  LS2~9JT\\
  UK}

\email{samuel@maths.leeds.ac.uk}

\address{Department of Mathematics\\
  University of Florida\\
  Box 118105\\
  Gainesville, FL 32611-8105\\
  USA}

\email{mjury@math.ufl.edu}

\subjclass[2000]{47B33}

\thanks{${}^1$Research supported by EPSRC grant EP/P502578/1}
\thanks{${}^2$Research supported by NSF grant DMS 0701268} 
\thanks{The authors would also like to thank the organizers of the Fourth Conference on Modern Complex Analysis and Operator Theory and Applications, held in El Escorial, Spain, June 2009, where this work was initiated.}
\date{\today}

\begin{abstract}
We prove that a composition operator is bounded on the Hardy space $H^2$ of the right half-plane if and only if the inducing map fixes the point at infinity non-tangentially, and has a finite angular derivative $\lambda$ there. In this case the norm, essential norm, and spectral radius of the operator are all equal to $\sqrt{\lambda}$.
\end{abstract}
\maketitle
%\tableofcontents

\section{Introduction}
Analytic composition operators have already been studied in great detail on Hardy spaces on the unit disc of the the complex plane. It is a consequence of the Littlewood subordination principle that all such operators are bounded on all the Hardy spaces, as well as a large class of other spaces of functions. Characterizations of compactness and weak compactness have also been produced, including those of Cima and Matheson \cite{CimaMatheson}, Sarason \cite{Sarason} and Shapiro \cite{Shapiro87}.

Comparatively little work has been done, however, on equivalent spaces of a half plane. Although corresponding Hardy spaces of the disc and half-plane are isomorphic, composition operators act very differently in the two cases, for example, not all analytic composition operators are bounded. In addition, Valentin Matache showed in \cite{Matache99} that there are in fact no compact composition operators in the half-plane case. Characterizations of isometric composition operators have now been given in both cases however: in the disc by Nordgren \cite{Nordgren}, and more recently in the half plane by Chalendar and Partington \cite{Chalendar03}.

In \cite{Elliott}, the first named author produced a simple description of the bounded composition operators with rational symbol, and gave a necessary condition for boundedness of more general inducing maps. Previously, Matache \cite{Matache08} showed that a composition operator is bounded on the Hardy space if and only if it has finite angular derivative at infinity.  We will use techniques similar to those developed by the second named author in \cite{Jury} to give a simplified proof of Matache's result, and we prove that the norm of $C_\varphi$ on $H^2$ is in fact equal to the square root of the angular derivative $\varphi^\prime(\infty)$.  Furthermore the norm, essential norm, and spectral radius are all equal, which in particular strengthens Matache's result on non-compactness.  Similar results follow for the other $H^p$ spaces.  

\section{Preliminaries}
Let $\mathbb H$ denote the right half-plane $\{\Re z >0\}$.  We consider the Hardy spaces $H^p(\mathbb H)$ for $0<p<\infty$, defined as the space of all holomorphic functions in $\mathbb H$ for which
\begin{equation*}
\sup_{x>0} \frac{1}{2\pi}\int_{-\infty}^\infty |f(x+iy)|^p\, dy <\infty
\end{equation*}
When $p\geq 1$, the $p^{th}$ root of this supremum defines a norm, and $H^p(\mathbb H)$ is a Banach space.  The space $H^2(\mathbb H)$ is a reproducing kernel Hilbert space over $\mathbb H$, with kernel
\begin{equation*}
k_w(z) =\frac{1}{z+\overline{w}}
\end{equation*}
This means that for each $w\in\mathbb H$, the evaluation functional $f\to f(w)$ is bounded on $H^2$, and 
\begin{equation}\label{E:repro}
f(w)=\langle f, k_w\rangle_{H^2}.
\end{equation}
We will be interested in composition operators on $H^p$; these are operators defined by 
\begin{equation*}
C_\varphi f:=f\circ \varphi
\end{equation*}
where $\varphi:\mathbb H\to \mathbb H$ is a holomorphic mapping.  If $C_\varphi$ is bounded, then a quick calculation using (\ref{E:repro}) shows that 
\begin{equation}\label{E:cphikernel}
C^*_\varphi k_w =k_{\varphi(w)}.  
\end{equation}

We will prove that $C_\varphi$ is bounded on $H^2$ if and only if the map $\varphi$ fixes the point at infinity and has a finite angular derivative there.  The key idea in the proof is to express both the angular derivative hypothesis and the boundedness conclusion in terms of the positivity of certain kernels.  By a positive kernel on $\mathbb H$ we mean any function $K(z,w)$ on $\mathbb H\times \mathbb H$ with the property that
\begin{equation}
\sum_{i,j=1}^n c_i \overline{c_j} K(x_i, x_j) \geq 0
\end{equation} 
for all $n\geq 1$, all choices of scalars $c_1, \dots c_n$ and sets of points $x_1, \dots x_n\in \mathbb H$.  

We begin by collecting some relevant background material.  The following characterization of self-maps of $\mathbb H$ is well known; it follows immediately from the half-plane formulation of the Nevanlinna-Pick interpolation theorem.

\begin{prop}[Nevanlinna]\label{P:nev}  A holomorphic function $\psi$ in $\mathbb H$ has positive real part if and only if the kernel
\begin{equation*}
\frac{\psi(z)+\overline{\psi(w)}}{z+\overline{w}}
\end{equation*}
is positive.
\end{prop}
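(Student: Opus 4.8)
The plan is to treat the two implications separately, with essentially all of the substance lying in the ``if'' direction. For the ``only if'' direction, suppose the kernel $K(z,w)=\frac{\psi(z)+\overline{\psi(w)}}{z+\overline{w}}$ is positive. Specializing the defining inequality to a single point ($n=1$, $c_1=1$, $x_1=z$) gives $\frac{\psi(z)+\overline{\psi(z)}}{z+\overline{z}}\ge 0$. Since $z+\overline{z}=2\Re z>0$ for $z\in\mathbb H$ and $\psi(z)+\overline{\psi(z)}=2\Re\psi(z)$, this forces $\Re\psi(z)\ge 0$ throughout $\mathbb H$. If one insists on the real part being strictly positive, note that $\Re\psi$ is harmonic and nonnegative, so by the minimum principle either $\Re\psi>0$ everywhere or $\Re\psi\equiv 0$, the latter forcing $\psi$ to be a purely imaginary constant; both cases are usually subsumed under ``positive real part.''

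For the ``if'' direction I would invoke the Herglotz--Nevanlinna integral representation of a function with nonnegative real part on $\mathbb H$. Writing $z=x+iy$, the harmonic function $\Re\psi$ has a Poisson representation against the boundary (the imaginary axis), which integrates to
\begin{equation*}
\psi(z)=ib+az+\frac{1}{\pi}\int_{-\infty}^\infty\frac{d\nu(t)}{z-it},
\end{equation*}
where $b\in\mathbb R$, $a\ge 0$, and $\nu$ is a positive measure (a purely imaginary counterterm may be needed inside the integral to secure convergence, but as noted below this will not affect the computation). The point is that $\Re\frac{1}{z-it}=\frac{x}{x^2+(y-t)^2}$ is exactly the half-plane Poisson kernel, so this representation precisely captures the functions of positive real part.

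The computation then makes positivity transparent. Since $\overline{1/(w-it)}=1/(\overline w+it)$ for real $t$, any purely imaginary additive constant cancels in $\psi(z)+\overline{\psi(w)}$, and using the algebraic identity
\begin{equation*}
\frac{1}{z-it}+\frac{1}{\overline w+it}=\frac{z+\overline w}{(z-it)(\overline w+it)}
\end{equation*}
I obtain, after dividing by $z+\overline w$,
\begin{equation*}
\frac{\psi(z)+\overline{\psi(w)}}{z+\overline w}=a+\frac{1}{\pi}\int_{-\infty}^\infty g_t(z)\,\overline{g_t(w)}\,d\nu(t),\qquad g_t(z):=\frac{1}{z-it}.
\end{equation*}
Each rank-one kernel $g_t(z)\overline{g_t(w)}$ is positive, since $\sum_{i,j}c_i\overline{c_j}g_t(x_i)\overline{g_t(x_j)}=\bigl|\sum_i c_i g_t(x_i)\bigr|^2\ge 0$; positivity is preserved under integration against the positive measure $\nu$, and the constant $a\ge 0$ contributes $a\,|\sum_i c_i|^2\ge 0$. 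Hence $K$ is positive.

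I expect the main obstacle to be the clean statement and justification of the integral representation, in particular the handling of the convergence counterterm and the growth/finiteness condition on $\nu$; conceptually, however, once the representation is in hand the positivity is immediate from the factorization $g_t(z)\overline{g_t(w)}$. Alternatively, the ``if'' direction is the assertion that a function of positive real part interpolates its own values, which is the easy half of the half-plane Nevanlinna--Pick theorem alluded to above; I have chosen the representation-theoretic route because it exhibits the positivity explicitly.
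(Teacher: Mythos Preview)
Your argument is substantively correct, but you have swapped the labels ``if'' and ``only if'': in the sentence ``$\psi$ has positive real part if and only if the kernel is positive,'' the \emph{only if} direction is $\Re\psi>0\Rightarrow$ kernel positive, which is the implication you establish via the Herglotz--Nevanlinna representation; the easy diagonal specialization is the \emph{if} direction. This is purely cosmetic.

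As for comparison with the paper: the paper does not supply a proof at all. It records the proposition as well known and says it ``follows immediately from the half-plane formulation of the Nevanlinna--Pick interpolation theorem.'' You in fact mention this alternative yourself in your last paragraph. Your chosen route via the integral representation is genuinely different and more self-contained: it exhibits the kernel explicitly as a nonnegative constant plus a superposition of rank-one positive kernels $g_t(z)\overline{g_t(w)}$ against a positive measure, so the positivity is visible rather than inherited from an interpolation theorem used as a black box. The cost is that you must invoke (and, as you acknowledge, be careful with) the Herglotz--Nevanlinna representation, including the purely imaginary counterterm needed for convergence; your observation that any such counterterm cancels in $\psi(z)+\overline{\psi(w)}$ is exactly the point that makes the computation go through. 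Either approach is standard and acceptable here.
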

Next, we need the fact that if $K_1(z,w)$ and $K_2(z,w)$ are positive kernels, then the product $K_1(z,w)K_2(z,w)$ is also a positive kernel; this is just the Schur (or Hadamard) product theorem expressed in kernel language; see e.g. \cite[Appendix A]{agler-mccarthy}.

Finally, we recall some of the theory of angular derivatives; for a thorough discussion of this topic see \cite[Chapter 4]{Shapiro93}.  A sequence of points $z_n=x_n+iy_n$ in $\mathbb H$ is said to approach $\infty$ non-tangentially if $x_n\to \infty$ and the ratios $|y_n|/x_n$ are uniformly bounded.  We say a map $\varphi:\mathbb H\to \mathbb H$ fixes infinity non-tangentially if $\varphi(z_n)\to \infty$ whenever $z_n\to \infty$ non-tangentially, and write $\varphi(\infty)=\infty$.  If $\varphi(\infty)=\infty$, we say that $\varphi$ has a finite angular derivative at $\infty$ if the non-tangential limit
\begin{equation}\label{E:angdivH}
\lim_{z\to \infty} \frac{z}{\varphi(z)}
\end{equation}
exists and is finite; we write $\varphi^\prime(\infty)$ for this quantity.  If we let $\psi$ be the self-map of $\mathbb D$ conjugate to $\varphi$ via the Cayley transform $\tau(\zeta)=\frac{1+\zeta}{1-\zeta}$, that is $\psi = \tau^{-1} \circ\varphi\circ\tau$, then (\ref{E:angdivH}) is easily seen to be equivalent to the existence of the non-tangential limit
\begin{equation}\label{E:angdivD}
\lim_{\zeta\to 1} \frac{1-\psi(\zeta)}{1-\zeta}.
\end{equation}
Moreover the Julia-Caratheodory theorem \cite[p.57]{Shapiro93} says that this limit is also equal to the non-tangential limit of $\psi^\prime(\zeta)$ as $\zeta\to 1$, justifying the name.  We will need the following half-plane variant of the Julia-Caratheodory theorem:
\begin{prop}[Julia-Caratheodory theorem in $\mathbb H$] \label{P:jc}  
Let $\varphi:\mathbb H\to \mathbb H$ be holomorphic.  The following are equivalent:
\begin{enumerate}
\item $\varphi(\infty)=\infty$ and $\varphi^\prime(\infty)$ exists.  
\item $\displaystyle{\sup_{z\in\mathbb H} \frac{\Re z}{\Re \varphi(z)}<\infty}$
\item $\displaystyle{\limsup_{z\to \infty} \frac{\Re z}{\Re \varphi(z)}<\infty}$
\end{enumerate}
Moreover the quantities in (2) and (3) are both equal to the angular derivative $\varphi^\prime(\infty)$.
\end{prop}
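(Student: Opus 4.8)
The plan is to transfer the entire statement to the unit disc via the Cayley transform $\tau(\zeta)=\frac{1+\zeta}{1-\zeta}$ and then invoke the classical Julia--Carathéodory theorem for $\psi=\tau^{-1}\circ\varphi\circ\tau$ recalled above. The computational heart of the reduction is the identity $\Re\tau(\zeta)=\frac{1-|\zeta|^2}{|1-\zeta|^2}$, which I would verify by a direct calculation. Writing $z=\tau(\zeta)$ and using $\varphi\circ\tau=\tau\circ\psi$, this yields
\begin{equation*}
\frac{\Re z}{\Re\varphi(z)}=\frac{|1-\psi(\zeta)|^2}{1-|\psi(\zeta)|^2}\bigg/\frac{|1-\zeta|^2}{1-|\zeta|^2},
\end{equation*}
the right-hand side being exactly the Julia quotient of $\psi$ at the boundary point $1$. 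I would also record that $\tau$ carries Stolz angles at $\zeta=1$ onto nontangential approach regions at $\infty$: since $1-\zeta=\frac{2}{z+1}$ we have $\arg(1-\zeta)=-\arg(z+1)$, so the approach $z\to\infty$ nontangentially corresponds precisely to $\zeta\to1$ within a Stolz angle.

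With this dictionary in place the three conditions read: (1) says $\psi$ has nontangential limit $1$ at $\zeta=1$ and an angular derivative there; (2) says the Julia quotient of $\psi$ is bounded on $\mathbb D$; (3) says it has finite $\limsup$ as $\zeta\to1$. The implication (2)$\Rightarrow$(3) is immediate, and (1)$\Rightarrow$(2) is exactly Julia's inequality: once the angular derivative $d:=\psi^\prime(1)$ exists, the Julia quotient is bounded above by $d$ everywhere, so $\sup_z\frac{\Re z}{\Re\varphi(z)}\leq d=\varphi^\prime(\infty)$.

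For (3)$\Rightarrow$(1) I would first test finiteness of the $\limsup$ along the radial approach $\zeta=r\to1$. Since $\frac{1-r^2}{(1-r)^2}\to\infty$, boundedness of the quotient forces $\frac{|1-\psi(r)|^2}{1-|\psi(r)|^2}\to0$, and the elementary estimate $\frac{|1-w|^2}{1-|w|^2}\geq\frac{|1-w|}{2}$ then gives $\psi(r)\to1$; the same estimate bounds $\liminf_{\zeta\to1}\frac{1-|\psi(\zeta)|}{1-|\zeta|}$. Hence the disc Julia--Carathéodory theorem supplies the angular derivative together with the nontangential limit $\psi(\zeta)\to1$, i.e. $\varphi(\infty)=\infty$ and $\varphi^\prime(\infty)$ exists. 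Finally, for the ``Moreover'' assertion I would match all three quantities to $d$: Julia's inequality gives $\sup_z\frac{\Re z}{\Re\varphi(z)}\leq d$ and $\limsup_{z\to\infty}\frac{\Re z}{\Re\varphi(z)}\leq d$, while Julia--Carathéodory says the Julia quotient has nontangential limit exactly $d$ at $1$, forcing both quantities to be $\geq d$; combined with the already-established identity $\varphi^\prime(\infty)=\lim_{\zeta\to1}\frac{1-\psi(\zeta)}{1-\zeta}=d$ this gives equality throughout. The step I expect to require the most care is (3)$\Rightarrow$(1): one must ensure not merely that $\psi$ has a boundary limit of modulus one but that this limit is the specific point $1$ (equivalently, that $\varphi$ genuinely fixes $\infty$ rather than sending it to a finite boundary value), which is precisely what the radial computation above is designed to guarantee.
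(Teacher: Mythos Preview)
Your proposal is correct and follows essentially the same route as the paper: both transfer to the disc via the Cayley transform, establish the identity $\frac{\Re z}{\Re\varphi(z)}=\frac{|1-\psi(\zeta)|^2}{1-|\psi(\zeta)|^2}\cdot\frac{1-|\zeta|^2}{|1-\zeta|^2}$, invoke Julia's inequality for (1)$\Rightarrow$(2), and for (3)$\Rightarrow$(1) test along the radial approach $r\to1$ to bound $\frac{1-|\psi(r)|}{1-r}$ and then appeal to the disc Julia--Carath\'eodory theorem. Your treatment of the ``Moreover'' clause and of the auxiliary fact that $\psi(r)\to1$ is in fact more explicit than the paper's.
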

\begin{proof} 
Transporting things to the disk, we find using the notation introduced above that
\begin{equation}\label{E:halfplane_julia}
\frac{\Re z}{\Re \varphi(z)}= \left(\frac{|1-\psi(\zeta)|^2}{1-|\psi(\zeta)|^2}\right)\left(\frac{1-|\zeta|^2}{|1-\zeta|^2}\right)
\end{equation}
so that (1) implies (2) is just the half-plane version of Julia's theorem, \cite[p.63]{Shapiro93}.  That (2) implies (3) is trivial.  To prove (3) implies (1), we again move the problem to the unit disk.    Let $z\to \infty$ in $\mathbb H$ along the real axis, so that $r=\tau^{-1}(z)$ tends radially to $1$ in $\mathbb D$ (in fact non-tangential convergence is sufficient here).  Letting $L$ denote the limit supremum in (3), we see from (\ref{E:halfplane_julia}) that
\begin{equation}
\limsup_{r\to 1} \frac{|1-\psi(r)|^2}{|1-r|^2} \frac{1-|r|^2}{1-|\psi(r)|^2}\leq L,
\end{equation}
so for some finite $M>L$ and all $r$ sufficiently close to 1,
\begin{equation}\label{E:julialemma}
\frac{|1-\psi(r)|^2}{|1-r|^2} \leq M  \frac{1-|\psi(r)|^2}{1-|r|^2}.
\end{equation}
We will be done if we can show that
\begin{equation}\label{E:jcinfdisk}
\liminf_{r\to 1} \frac{1-|\psi(r)|^2}{1-|r|^2} <\infty,
\end{equation}
since again by the disk version of Julia-Caratheodory \cite[p.57]{Shapiro93}, this implies that $\psi$ has a finite angular derivative at $1$, which is equivalent to (1).  

To prove (\ref{E:jcinfdisk}), observe that the numerator on the left hand side of (\ref{E:julialemma}) dominates $(1-|\psi(r)|)^2$, thus
$$
\frac{(1-|\psi(r)|)^2}{|1-r|^2} \leq M \left(\frac{1-|\psi(r)|^2}{1-|r|^2} \right)
$$
which implies (after some algebra, and using the fact that $r$ is real)
$$
\frac{1-|\psi(r)|^2}{1-|r|^2} \leq M\left( \frac{1+|\psi(r)|}{1+|r|}\right)^2.
$$
The right hand side remains bounded as $r\to 1$, which proves (\ref{E:jcinfdisk}).

\end{proof}

The key observation used in the proof of Theorem~\ref{T:main} below is that by combining Propositions \ref{P:nev} and \ref{P:jc}, we can express the existence of the angular derivative in terms of the positivity of a kernel.  In particular, we note that if $\varphi^\prime(\infty)=\lambda$, then $\displaystyle{\sup_{z\in\mathbb H} \frac{\Re z}{\Re \varphi(z)}=\lambda}$, and so the function $\varphi(z)-\lambda^{-1}z$ has positive real part in $\mathbb H$.  By Nevanlinna's theorem, the kernel
\begin{equation}
\frac{(\varphi(z)-\lambda^{-1}z)+\overline{(\varphi(w)-\lambda^{-1}w})}{z+\overline{w}}
\end{equation}
is therefore positive.

\section{Main results}
\begin{thm}\label{T:main}
Let $\varphi:\mathbb H\to \mathbb H$ be holomorphic.  The composition operator $C_\varphi$ is bounded on $H^2(\mathbb H)$ if and only if $\varphi$ has a finite angular derivative $0<\lambda<\infty$ at infinity, in which case $\|C_\varphi\|=\sqrt{\lambda}$.  
\end{thm}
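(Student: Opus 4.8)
The plan is to recast the entire problem in the language of positive kernels, converting the operator-norm bound into a statement about positivity of a single explicit kernel. Concretely, I would first establish the criterion that, for $M>0$, the composition operator $C_\varphi$ is bounded with $\|C_\varphi\|\leq M$ if and only if the kernel
\[
K_M(z,w)=\frac{M^2}{z+\overline w}-\frac{1}{\varphi(z)+\overline{\varphi(w)}}
\]
is positive on $\mathbb H\times\mathbb H$. The ``only if'' direction is immediate: if $C_\varphi$ is bounded then by (\ref{E:cphikernel}) we have $C_\varphi^*k_w=k_{\varphi(w)}$, and positivity of $M^2 I-C_\varphi C_\varphi^*$ is equivalent to positivity of the kernel $\langle(M^2 I-C_\varphi C_\varphi^*)k_w,k_z\rangle=M^2 k_w(z)-k_{\varphi(w)}(\varphi(z))$, which is exactly $K_M$. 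For the ``if'' direction I would define $T$ on the dense span of the kernel functions by $Tk_w=k_{\varphi(w)}$; positivity of $K_M$ says precisely that $\|\sum_j c_j k_{\varphi(w_j)}\|^2\leq M^2\|\sum_j c_j k_{w_j}\|^2$, so $T$ extends to a bounded operator with $\|T\|\leq M$, and a one-line reproducing-kernel computation shows $T^*=C_\varphi$, giving boundedness and the norm bound simultaneously.

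Granting this criterion, the upper bound $\|C_\varphi\|\leq\sqrt\lambda$ follows from a Schur-product argument. Assuming $\varphi^\prime(\infty)=\lambda$, the key observation recorded at the end of Section~2 gives that $\frac{\varphi(z)+\overline{\varphi(w)}}{z+\overline w}-\lambda^{-1}$ is a positive kernel. Since $\varphi$ maps $\mathbb H$ into $\mathbb H$, the function $\frac{1}{\varphi(z)+\overline{\varphi(w)}}$ is also positive (it is the reproducing kernel of $H^2$ composed with $\varphi$). Multiplying these two positive kernels and invoking the Schur product theorem shows that
\[
\left(\frac{\varphi(z)+\overline{\varphi(w)}}{z+\overline w}-\lambda^{-1}\right)\cdot\frac{1}{\varphi(z)+\overline{\varphi(w)}}=\frac{1}{z+\overline w}-\frac{\lambda^{-1}}{\varphi(z)+\overline{\varphi(w)}}
\]
is positive; scaling by $\lambda$ gives exactly $K_{\sqrt\lambda}$, so $\|C_\varphi\|\leq\sqrt\lambda$.

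For the converse and the matching lower bound, I would run the criterion in reverse and restrict to the diagonal. If $C_\varphi$ is bounded with $M=\|C_\varphi\|$, then $K_M$ is positive, and since the diagonal entries of a positive kernel are nonnegative, setting $w=z$ yields $\frac{M^2}{2\Re z}\geq\frac{1}{2\Re\varphi(z)}$, that is $\frac{\Re z}{\Re\varphi(z)}\leq M^2$ for every $z\in\mathbb H$. Thus condition (2) of Proposition~\ref{P:jc} holds with supremum at most $M^2$, so $\varphi$ fixes $\infty$ with a finite angular derivative $\lambda$; moreover that proposition identifies $\lambda$ with the supremum, so $\lambda\leq M^2=\|C_\varphi\|^2$, giving $\sqrt\lambda\leq\|C_\varphi\|$. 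Combining with the upper bound yields $\|C_\varphi\|=\sqrt\lambda$ and completes both implications (positivity of the supremum, hence $\lambda>0$, is automatic).

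The main obstacle I anticipate is the careful justification of the norm criterion, specifically the ``positivity implies boundedness'' half: one must work with $C_\varphi$ before knowing it is bounded, so the argument should construct $T$ on the dense span of kernels, verify the norm inequality there, and only then recover $C_\varphi$ as $T^*$. Once this bridge between kernel positivity and operator norm is in place, the two inequalities $\|C_\varphi\|\leq\sqrt\lambda$ and $\|C_\varphi\|\geq\sqrt\lambda$ reduce to the Schur product identity and the diagonal restriction respectively, both of which are short.
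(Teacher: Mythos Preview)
Your proposal is correct and follows essentially the same approach as the paper: both reduce the norm bound to positivity of the kernel $K_M$, prove positivity of $K_{\sqrt\lambda}$ via the identical Schur-product factorization (using Nevanlinna's theorem on $\varphi(z)-\lambda^{-1}z$ and positivity of $(\varphi(z)+\overline{\varphi(w)})^{-1}$), and obtain the converse and lower bound from the diagonal inequality $\Re z/\Re\varphi(z)\leq M^2$ together with Proposition~\ref{P:jc}. Your packaging of the norm/positivity criterion as a stand-alone lemma, and your explicit attention to defining $T$ on the kernel span before knowing $C_\varphi$ is bounded, are slightly more careful than the paper's inline treatment, but the mathematical content is the same.
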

\begin{proof}
Suppose $\varphi$ fixes $\infty$ and $\varphi^\prime(\infty)=\lambda$; it suffices to show
\begin{equation}\label{E:kernel_norm}
\lambda \langle k_w, k_z\rangle_{H^2}  - \langle C_\varphi^* k_w, C_\varphi^* k_z\rangle_{H^2} 
\end{equation}
is a positive kernel.  Indeed, if this is so then (recalling that the linear span of the kernel functions $k_w$ is dense) the densely defined operator $C_\varphi^*:k_w\to k_{\varphi(w)}$ is bounded by $\sqrt{\lambda}$.  It thus has a unique bounded extension to $H^2$, and this extension must be the adjoint of $C_\varphi$.  Expanding out (\ref{E:kernel_norm}), we get
\begin{equation*}
\left(\frac{\lambda}{z+\overline{w}} - \frac{1}{\varphi(z)+\overline{\varphi(w)}} \right)
\end{equation*}
Dividing through by $\lambda$, we will show that 
\begin{equation}\label{E:kernel_temp}
\frac{1}{z+\overline{w}} - \frac{\lambda^{-1}}{\varphi(z)+\overline{\varphi(w)}} 
\end{equation}
is positive.  Factor out $(\varphi(z)+\overline{\varphi(w)})^{-1}$ to obtain
\begin{align}
\frac{1}{z+\overline{w}} - \frac{\lambda^{-1}}{\varphi(z)+\overline{\varphi(w)}} &= \frac{1}{\varphi(z)+\overline{\varphi(w)}}\left(\frac{\varphi(z)+\overline{\varphi(w)}}{z+\overline{w}}-\lambda^{-1}\right) \\
&= \frac{1}{\varphi(z)+\overline{\varphi(w)}}\left(\frac{(\varphi(z)-\lambda^{-1}z) +\overline{(\varphi(w)-\lambda^{-1}w)}}{z+\overline{w}}  \right) 
\end{align}
The expression on the last line is a product of two kernels.  The kernel $(\varphi(z)+\overline{\varphi(w)})^{-1}$ is positive since it factors as $\langle k_{\varphi(w)},k_{\varphi(z)}\rangle_{H^2}$.  The kernel in parentheses is positive by the remark following Proposition \ref{P:jc}.  Thus (\ref{E:kernel_temp}) is positive, as desired.

For the converse, suppose $\|C_\varphi\|\leq M$.  Then for every $z\in\mathbb H$, 
\begin{equation*}
\frac{1}{2\Re \varphi(z)} =\|C_\varphi^*k_z\|^2 \leq M^2 \|k_z\|^2 = M^2 \frac{1}{2\Re z}
\end{equation*}
which implies that $\Re z/\Re\varphi(z)$ is bounded by $M^2$ in $\mathbb H$, and hence $\varphi$ fixes $\infty$ and has finite angular derivative by Proposition~\ref{P:jc}. 

For the norm computation, observe that the first part of the proof shows that $\|C_\varphi\|\leq \sqrt{\lambda}$, while the second part of the proof (and an application of the Julia-Caratheodory theorem) shows that $\|C_\varphi\|\geq \sqrt{\lambda}$. 
\end{proof}

The following corollary follows immediately from the proof.  The corresponding statement is false in the disk; see \cite{BourdonRetsek}.
\begin{cor}
Bounded composition operators on $H^2(\mathbb H)$ have the \emph{kernel supremum property}, that is,
\begin{equation}\label{E:ksp}
\|C_\varphi\|=\sup_{z\in\mathbb H}\frac{\|C_\varphi^* k_z\|}{\|k_z\|}.
\end{equation}
\end{cor}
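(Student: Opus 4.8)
The plan is to observe that every ingredient needed is already present in the proof of Theorem~\ref{T:main}; the only task is to compute the ratio $\|C_\varphi^* k_z\|/\|k_z\|$ explicitly and to identify its supremum with the angular derivative $\lambda$. Throughout I assume $C_\varphi$ bounded, so by the theorem $\varphi$ fixes $\infty$ and $\varphi^\prime(\infty)=\lambda$ with $\|C_\varphi\|=\sqrt{\lambda}$.

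First I would record the two elementary norm computations. Since $k_z$ is the reproducing kernel, the reproducing property (\ref{E:repro}) gives $\|k_z\|^2=\langle k_z,k_z\rangle=k_z(z)=\frac{1}{2\Re z}$. Using (\ref{E:cphikernel}) we have $C_\varphi^* k_z=k_{\varphi(z)}$, and hence $\|C_\varphi^* k_z\|^2=\|k_{\varphi(z)}\|^2=\frac{1}{2\Re\varphi(z)}$. Dividing these identities yields
\begin{equation*}
\frac{\|C_\varphi^* k_z\|^2}{\|k_z\|^2}=\frac{\Re z}{\Re\varphi(z)}.
\end{equation*}

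Next I would take the supremum over $z\in\mathbb H$. By Proposition~\ref{P:jc} (and the remark following it), the boundedness of $C_\varphi$ guarantees that the quantity $\sup_{z\in\mathbb H}\Re z/\Re\varphi(z)$ is finite and equals the angular derivative $\lambda$. Combining this with the previous display gives
\begin{equation*}
\sup_{z\in\mathbb H}\frac{\|C_\varphi^* k_z\|}{\|k_z\|}=\sqrt{\lambda}=\|C_\varphi\|,
\end{equation*}
which is exactly (\ref{E:ksp}).

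There is essentially no obstacle here, since the inequality $\leq$ in (\ref{E:ksp}) is automatic from $\|C_\varphi^* k_z\|\leq\|C_\varphi^*\|\,\|k_z\|=\|C_\varphi\|\,\|k_z\|$; the content is that the supremum is saturated, and this is precisely because the Julia--Caratheodory quantity of Proposition~\ref{P:jc} equals $\lambda=\|C_\varphi\|^2$. The only point requiring mild care is that the supremum need not be attained at any single point of $\mathbb H$ but is approached as $z\to\infty$ non-tangentially; this is exactly what the equality in Proposition~\ref{P:jc} encodes, so no separate argument is needed.
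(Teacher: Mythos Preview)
Your argument is correct and is exactly what the paper intends: the corollary is stated without proof, with the remark that it ``follows immediately from the proof'' of Theorem~\ref{T:main}, and your write-up simply makes explicit the kernel-norm computation $\|C_\varphi^* k_z\|^2/\|k_z\|^2=\Re z/\Re\varphi(z)$ from the converse half of that proof together with the identification of its supremum with $\lambda=\|C_\varphi\|^2$ via Proposition~\ref{P:jc}.
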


Matache \cite{Matache99} proved that there are no nonzero compact composition operators on $H^2(\mathbb H)$.  The following corollary strengthens this result, and shows that in a sense composition operators on the half-plane are as non-compact as possible.   Recall that if $T$ is a bounded operator on Hilbert space $H$, its essential norm $\|T\|_e$ is the distance from $T$ to the set of compact operators on $H$, and $\|T\|_e=0$ if and only if $T$ is compact.  

\begin{cor}  For every bounded composition operator on $H^2(\mathbb H)$, we have $\|C_\varphi\|=\|C_\varphi\|_e$.  In particular, there are no nonzero compact composition operators on $H^2(\mathbb H)$.  
\end{cor}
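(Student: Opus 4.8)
The plan is to prove $\|C_\varphi\|_e \geq \|C_\varphi\|$, since the reverse inequality $\|C_\varphi\|_e \leq \|C_\varphi\|$ is immediate from the definition of the essential norm (take the zero operator as approximant). The final assertion about non-compactness will then follow for free: if $C_\varphi$ were compact we would have $\|C_\varphi\|_e = 0$, hence $\|C_\varphi\| = 0$, contradicting $\|C_\varphi\| = \sqrt{\lambda}$ with $0<\lambda<\infty$ from Theorem~\ref{T:main}.

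Because passing to adjoints preserves both the operator norm and the ideal of compact operators, I would work with $C_\varphi^*$, for which $\|C_\varphi^*\|_e = \|C_\varphi\|_e$ and the action on kernels $C_\varphi^* k_z = k_{\varphi(z)}$ is explicit. The key general fact is the standard lower bound for an essential norm obtained by testing against a weakly null sequence of unit vectors: if $(e_n)$ is weakly null with $\|e_n\|=1$, then for every compact $K$ we have $\|C_\varphi^* - K\| \geq \limsup_n \|(C_\varphi^* - K)e_n\| \geq \limsup_n \|C_\varphi^* e_n\|$, the last step because $\|K e_n\| \to 0$ for compact $K$. Taking the infimum over all compact $K$ gives $\|C_\varphi\|_e \geq \limsup_n \|C_\varphi^* e_n\|$.

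The natural test vectors are the normalized kernels $\hat k_z := k_z/\|k_z\|$, where $\|k_z\|^2 = k_z(z) = (2\Re z)^{-1}$, so that $\hat k_z = \sqrt{2\Re z}\,k_z$. The reproducing property gives $\langle f, \hat k_z\rangle = \sqrt{2\Re z}\, f(z)$; testing on the dense span of kernels yields $\langle k_w, \hat k_z\rangle = \sqrt{2\Re z}/(z+\overline w) \to 0$ whenever $\Re z \to \infty$, and combined with $\|\hat k_z\|=1$ this shows $\hat k_z \to 0$ weakly as $z\to\infty$ non-tangentially. Furthermore,
\[
\|C_\varphi^* \hat k_z\| = \sqrt{2\Re z}\,\|k_{\varphi(z)}\| = \sqrt{\frac{\Re z}{\Re\varphi(z)}}.
\]

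To finish I would invoke Proposition~\ref{P:jc}: its statement (2)--(3), together with the ``moreover'' clause, says that $\Re z/\Re\varphi(z)$ has non-tangential limit supremum at $\infty$ equal to $\lambda$. Hence there is a sequence $z_n \to \infty$ non-tangentially --- so in particular $\Re z_n \to \infty$ --- with $\Re z_n/\Re\varphi(z_n) \to \lambda$. The vectors $\hat k_{z_n}$ then form a weakly null unit sequence with $\|C_\varphi^* \hat k_{z_n}\| \to \sqrt{\lambda} = \|C_\varphi\|$, so the essential-norm lower bound gives $\|C_\varphi\|_e \geq \sqrt{\lambda}$ and therefore equality. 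The one delicate point, which is exactly where Proposition~\ref{P:jc} is essential, is the dual role of the sequence: it must simultaneously realize the limit supremum that equals $\lambda$ and tend to $\infty$ non-tangentially so that $\Re z_n\to\infty$ and the kernels are genuinely weakly null; the identification of the non-tangential $\limsup$ with $\lambda$ is precisely what guarantees such a sequence exists.
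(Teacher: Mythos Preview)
Your argument is correct and follows essentially the same route as the paper: both bound $\|C_\varphi\|_e$ from below by testing $C_\varphi^*$ on normalized reproducing kernels, use that these are weakly null as $z\to\infty$, compute $\|C_\varphi^*\hat k_z\|=\sqrt{\Re z/\Re\varphi(z)}$, and invoke Proposition~\ref{P:jc} to identify the limit as $\sqrt{\lambda}$. One small simplification: the non-tangential restriction you impose is unnecessary, since $\sqrt{2\Re z}/|z+\overline w|\to 0$ for \emph{any} $z\to\infty$ in $\mathbb H$, so the ``dual role'' worry dissolves and the unrestricted $\limsup$ in Proposition~\ref{P:jc}(3) can be used directly.
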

\begin{proof}  The inequality $\|C_\varphi\|_e \leq \|C_\varphi\|$ is trivial.  For the reverse inequality, recall that 
\begin{equation*}
\|C_\varphi\|_e \geq \limsup_{z\to \infty} \frac{\|C_\varphi^* k_z\|}{\|k_z\|}
\end{equation*}
Indeed, given $\epsilon>0$ choose a compact $Q$ so that $\|C_\varphi-Q\|\leq \|C_\varphi\|_e +\epsilon$.  Since the normalized kernel functions $k_z/\|k_z\|$ tend weakly to $0$ as $z\to \infty$, we have
\begin{align}
\|C_\varphi^*\|_e+\epsilon &\geq \|C^*_\varphi-Q\| \\
                         &\geq \limsup_{z\to \infty}\frac{\|(C_\varphi^* -Q)k_z\|}{\|k_z\|}   \\
                         &= \limsup_{z\to \infty} \frac{\|C_\varphi^* k_z\|}{\|k_z\|} \\
                         &= \left(\limsup_{z\to\infty} \frac{\Re z}{\Re \varphi(z)}\right)^{1/2}\\
                         &=\sqrt{\lambda }
\end{align}
where the last equality follows by the Julia-Caratheodory theorem.  
\end{proof}

\begin{thm}
If $C_\varphi$ is bounded on $H^2$, then its spectral radius is equal to its norm.
\end{thm}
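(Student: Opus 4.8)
The plan is to reduce the spectral-radius computation to a chain-rule statement for the angular derivative. Since $C_\varphi^n = C_{\varphi_n}$, where $\varphi_n = \varphi\circ\cdots\circ\varphi$ is the $n$-th iterate, and each $C_\varphi^n$ is bounded, Theorem~\ref{T:main} applies to $\varphi_n$ and gives $\|C_\varphi^n\| = \sqrt{\lambda_n}$, where $\lambda_n := \varphi_n'(\infty)$. The Gelfand formula $r(C_\varphi) = \lim_n \|C_\varphi^n\|^{1/n}$ then turns the whole problem into computing $\lim_n \lambda_n^{1/(2n)}$. So if I can establish the identity $\lambda_n = \lambda^n$, with $\lambda = \varphi'(\infty)$, I will be done immediately: $r(C_\varphi) = \lim_n \lambda^{1/2} = \sqrt\lambda = \|C_\varphi\|$.

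Everything thus comes down to $\varphi_n'(\infty) = \lambda^n$. One inequality is free from the supremum formula in Proposition~\ref{P:jc}. Telescoping (with $\varphi_0(z)=z$),
\[
\frac{\Re z}{\Re\varphi_n(z)} = \prod_{k=0}^{n-1}\frac{\Re\varphi_k(z)}{\Re\varphi_{k+1}(z)},
\]
and bounding each factor $\Re\varphi_k(z)/\Re\varphi(\varphi_k(z))$ by $\sup_{w\in\mathbb H}\Re w/\Re\varphi(w) = \lambda$ yields $\lambda_n\le\lambda^n$.

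For the reverse inequality I would evaluate the angular derivative along the positive real axis, which approaches $\infty$ non-tangentially. Since the defining limit \eqref{E:angdivH} is non-tangential, $\lim_{x\to+\infty} x/\varphi(x) = \lambda$, so $\varphi(x)/x\to\lambda^{-1}$; in particular $\varphi(x)\to\infty$ and $\arg\varphi(x)\to 0$, so $\varphi(x)$ approaches $\infty$ non-tangentially. Inductively, if $\varphi_k(x)\to\infty$ non-tangentially with $x/\varphi_k(x)\to\lambda^k$, then applying \eqref{E:angdivH} along the non-tangential sequence $w=\varphi_k(x)$ gives $\varphi_k(x)/\varphi_{k+1}(x)\to\lambda$; multiplying the two convergences yields $x/\varphi_{k+1}(x)\to\lambda^{k+1}$, and since $\varphi_{k+1}(x)/\varphi_k(x)\to\lambda^{-1}$ is a positive real, $\arg\varphi_{k+1}(x)$ stays bounded, so $\varphi_{k+1}(x)$ again approaches $\infty$ non-tangentially and the induction continues. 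Because $\varphi_n$ has a finite angular derivative (its composition operator is bounded), the non-tangential limit $\lambda_n$ exists and hence equals this real-axis limit, whence $\lambda_n=\lambda^n$.

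The main obstacle is precisely this last step: the chain rule $\lambda_n = \lambda^n$, and more specifically verifying that the iterates $\varphi_k(x)$ continue to approach $\infty$ non-tangentially, so that the angular-derivative limit \eqref{E:angdivH} may legitimately be re-applied at each stage. The reduction to the Gelfand formula and the upper bound $\lambda_n\le\lambda^n$ are routine; the real content is in propagating non-tangential approach through the iteration, which is what forces the telescoping product to converge exactly to $\lambda^n$ rather than merely to a quantity bounded by $\lambda^n$.
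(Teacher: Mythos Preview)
Your proposal is correct and follows the same strategy as the paper: identify $C_\varphi^n=C_{\varphi_n}$, invoke Theorem~\ref{T:main} to get $\|C_\varphi^n\|=\sqrt{\varphi_n'(\infty)}$, and apply the spectral radius formula, so that everything reduces to the chain rule $\varphi_n'(\infty)=\lambda^n$. The paper's proof simply asserts this last identity as a consequence of the Julia--Carath\'eodory theorem, whereas you supply an explicit argument for it (the telescoping bound $\lambda_n\le\lambda^n$ via Proposition~\ref{P:jc}(2), and the matching lower bound by propagating non-tangential approach along the iterates on the positive real axis); your extra work is thus a self-contained justification of a step the paper leaves to the literature, not a different method.
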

\begin{proof}
Let $\lambda=\varphi^\prime(\infty)$.  Letting $\varphi_n$ denote the $n^{th}$ iterate of $\varphi$, by the Julia-Caratheodory theorem $\varphi_n^\prime(\infty)=\lambda^n$.  Thus $\|C_\varphi^n\| =\lambda^{n/2}$, so taking $n^{th}$ roots and letting $n$ go to infinity we get $r(C_\varphi)=\sqrt{\lambda}$ as desired.   
\end{proof}

Using the inner-outer factorization of $H^p$ functions, the norm calculation for $C_\varphi$ may be extended to the full range $0<p<\infty$.  We leave the details to the interested reader.
\begin{cor}  Let $0< p <\infty$.  The composition operator $C_\varphi$ is bounded on $H^p$ if and only if it is bounded on $H^2$, in which case ${\|C_\varphi\|}_{H^p}=\lambda^{1/p}$, where $\lambda=\varphi^\prime(\infty)$.    
\end{cor}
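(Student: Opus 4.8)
The plan is to transfer the whole problem to the already-established $H^2$ computation by means of the inner--outer factorization, using two reductions: first that the operator norm on each $H^q$ is detected by \emph{outer} functions alone, and second that an outer function of $H^p$ is a fixed power of an outer function of $H^2$ in a way compatible with composition.

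First I would record the two structural facts from Hardy space theory (obtained by transporting the classical disk results through the Cayley transform $\tau$). (i) Every $f\in H^p(\mathbb H)$ factors as $f=I\cdot O$ with $O$ outer, $I$ holomorphic with $|I|\le 1$ on $\mathbb H$, and $\|f\|_{H^p}=\|O\|_{H^p}$; moreover the outer functions of $H^p$ are exactly the functions $O=h^{2/p}$ with $h$ outer in $H^2$, and for such a pair $\|O\|_{H^p}^p=\|h\|_{H^2}^2$. (ii) If $g\in H^2$ is zero-free then $g^{2/p}$ is a well-defined holomorphic function lying in $H^p$ with $\|g^{2/p}\|_{H^p}^p=\|g\|_{H^2}^2$, since $|g^{2/p}(x+iy)|^p=|g(x+iy)|^2$ on every vertical line. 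The point of (ii) is that it applies to $g=C_\varphi h=h\circ\varphi$: because $h$ is outer it is zero-free and $\varphi$ maps into $\mathbb H$, so $h\circ\varphi$ is zero-free, whence $C_\varphi O=O\circ\varphi=(h\circ\varphi)^{2/p}=(C_\varphi h)^{2/p}$ and therefore $\|C_\varphi O\|_{H^p}^p=\|C_\varphi h\|_{H^2}^2$.

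Next I would carry out the reduction to outer functions. For any $f=I\cdot O$ one has $|C_\varphi f|=|I\circ\varphi|\,|O\circ\varphi|\le|O\circ\varphi|=|C_\varphi O|$ pointwise on $\mathbb H$ (using $|I\circ\varphi|\le1$), so $\|C_\varphi f\|\le\|C_\varphi O\|$ while $\|f\|=\|O\|$; hence on every $H^q$ the supremum of $\|C_\varphi f\|/\|f\|$ is already attained over outer functions, i.e. $\|C_\varphi\|_{H^q}=\sup_{O\ \mathrm{outer}}\|C_\varphi O\|_{H^q}/\|O\|_{H^q}$. Combining this with (i) and (ii), the bijection $O\leftrightarrow h$ gives, for $\lambda=\varphi'(\infty)$ with $C_\varphi$ bounded on $H^2$,
\[
\|C_\varphi\|_{H^p}=\Big(\sup_{h\ \mathrm{outer}}\frac{\|C_\varphi h\|_{H^2}^2}{\|h\|_{H^2}^2}\Big)^{1/p}=\|C_\varphi\|_{H^2}^{2/p}=\lambda^{1/p},
\]
which simultaneously proves boundedness on $H^p$ and the norm formula in the direction ``finite angular derivative $\Rightarrow$ bounded''.

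Finally, for the converse ``bounded on $H^p\Rightarrow$ bounded on $H^2$'' I would argue through point evaluations. Factorization (i) together with the $H^2$ kernel bound $|h(v)|^2\le\|h\|_{H^2}^2/(2\Re v)$ yields the estimate $|f(v)|^p\le\|f\|_{H^p}^p/(2\Re v)$ for all $f\in H^p$, with equality for $f(w)=(w+\bar v)^{-2/p}$; thus the evaluation functional $E_v$ on $H^p$ has norm exactly $(2\Re v)^{-1/p}$. Since $E_{\varphi(z)}=E_z\circ C_\varphi$, the bound $\|C_\varphi\|_{H^p}\le M$ forces $(2\Re\varphi(z))^{-1/p}\le M\,(2\Re z)^{-1/p}$, i.e. $\Re z/\Re\varphi(z)\le M^p$ uniformly on $\mathbb H$; Proposition~\ref{P:jc} then supplies the finite angular derivative, and hence Theorem~\ref{T:main} gives boundedness on $H^2$. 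I expect the main obstacle to be technical rather than conceptual: one must set up inner--outer factorization and the power correspondence carefully in the half-plane, in particular the identity $\|C_\varphi O\|_{H^p}^p=\|C_\varphi h\|_{H^2}^2$ and the genuine interchange of these powers with composition, after which everything reduces to a formal manipulation of the two reductions.
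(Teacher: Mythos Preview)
Your proposal is correct and follows exactly the route the paper indicates: the paper does not give a detailed proof but simply states that the inner--outer factorization of $H^p$ functions extends the $H^2$ norm calculation to all $0<p<\infty$ and leaves the details to the reader. You have supplied precisely those details --- the reduction to outer functions via $|I\circ\varphi|\le 1$, the bijection $O\leftrightarrow h^{2/p}$ between outer functions of $H^p$ and $H^2$, and the point-evaluation argument for the converse --- all of which are the natural way to carry out the paper's sketch.
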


\bibliographystyle{siam}
\bibliography{biblio}

\begin{thebibliography}{10}

\bibitem{agler-mccarthy}
{\sc J.~Agler and J.~E. McCarthy}, {\em Pick Interpolation and Hilbert function
  spaces}, vol. 44 of Graduate Studies in Mathematics, American Mathematical
  Society, Providence, RI, 2002.

\bibitem{BourdonRetsek}
{\sc P.~S. Bourdon and D.~Q. Retsek}, {\em Reproducing kernels and norms of
  composition operators}, Acta. Sci. Math. (Szeged), 127 (2001), pp.~387--394.

\bibitem{Chalendar03}
{\sc I.~Chalendar and J.~R. Partington}, {\em On the structure of invariant
  subspaces for isometric composition operators on {$H^2(\mathbb{D})$ and
  $H^2(\mathbb{C}_+)$}}, Archiv der Math., 81 (2003), pp.~193--207.

\bibitem{CimaMatheson}
{\sc J.~A. Cima and A.~L. Matheson}, {\em Essential norms of composition
  operators and \text{Aleksandrov} measures}, Pacific J. Math, 179 (1997),
  pp.~59--63.

\bibitem{Elliott}
{\sc S.~J. Elliott}, {\em Adjoints of composition operators on hardy spaces of
  the half-plane}, J. Funct. Anal., 256 (2009), pp.~4162--4186.

\bibitem{Jury}
{\sc M.~T. Jury}, {\em Reproducing kernels, de {B}ranges-{R}ovnyak spaces, and
  norms of weighted composition operators}, Proc. Amer. Math. Soc., 135 (2007),
  pp.~3669--3675 (electronic).

\bibitem{Matache99}
{\sc V.~Matache}, {\em Composition operators on \text{Hardy} spaces of a
  half-plane}, Proc. Amer. Math. Soc., 127 (1999), pp.~1483--1491.

\bibitem{Matache08}
\leavevmode\vrule height 2pt depth -1.6pt width 23pt, {\em Weighted composition
  operators on {$H\sp 2$} and applications}, Complex Anal. Oper. Theory, 2
  (2008), pp.~169--197.

\bibitem{Nordgren}
{\sc E.~A. Nordgren}, {\em Composition operators}, Canad. J. Math., 20 (1968),
  pp.~442--449.

\bibitem{Sarason}
{\sc D.~Sarason}, {\em \text{Composition operators as integral operators}},
  Dekker, New York, 1990.

\bibitem{Shapiro87}
{\sc J.~H. Shapiro}, {\em The essential norm of a composition operator}, Ann.
  Math., 125 (1987), pp.~375--404.

\bibitem{Shapiro93}
\leavevmode\vrule height 2pt depth -1.6pt width 23pt, {\em \text{Composition
  operators and classical function theory}}, University texts in Mathematics:
  Tracts in Mathematics, Springer-Verlag, New York, 1993.

\end{thebibliography}

\end{document}